\documentclass[11pt,a4paper,reqno]{amsart}

\usepackage[T1]{fontenc}
\usepackage{lmodern}
\usepackage[margin=30mm]{geometry}
\usepackage{mathtools,amssymb,booktabs,enumitem,microtype}
\usepackage[hidelinks]{hyperref}
\hypersetup{
  pdftitle={The strong nonfinite basis property of S7},
  pdfauthor={Lili Wang, Qingrui Yin and Aifa Wang},
  pdfsubject={An affirmative answer to Problem 1.2 of Yue, Ren and Gao},
  pdfkeywords={additively idempotent semiring, finite basis problem, S7, semilinear set}
}

\newtheorem{theorem}{Theorem}[section]
\newtheorem{lemma}[theorem]{Lemma}
\newtheorem{proposition}[theorem]{Proposition}
\newtheorem{corollary}[theorem]{Corollary}
\theoremstyle{definition}

\theoremstyle{remark}
\newtheorem{remark}[theorem]{Remark}
\numberwithin{equation}{section}

\newcommand{\N}{\mathbb N}
\newcommand{\V}{\mathsf V}
\newcommand{\F}{\mathcal F}
\newcommand{\E}{\mathcal E}
\newcommand{\Pow}{\mathcal P}
\newcommand{\Id}{\operatorname{Id}}
\newcommand{\Wd}{\operatorname{Wd}}
\newcommand{\cm}{\operatorname{cm}}
\newcommand{\supp}{\operatorname{supp}}
\newcommand{\occ}{\operatorname{occ}}
\newcommand{\Sym}{\operatorname{Sym}}
\newcommand{\nnorm}[1]{\lVert #1\rVert_1}
\newcommand{\vect}[1]{\mathbf #1}
\newcommand{\ssev}{S_7}
\setlist[enumerate]{label=\textup{(\roman*)},leftmargin=2.2em,itemsep=3pt}
\allowdisplaybreaks[1]
\title[The strong nonfinite basis property of $S_7$]{The strong nonfinite basis property of $S_7$}
\author{Lili Wang}
\address{School of Mathematical Sciences, Chongqing University of Technology,
Chongqing 400054, P.R. China}
\email{wllaf@cqut.edu.cn}
\author{Qingrui Yin}
\address{School of Mathematical Sciences, Chongqing University of Technology,
Chongqing 400054, P.R. China}
\email{13996559059@163.com}
\author{Aifa Wang}
\address{School of Mathematical Sciences, Chongqing University of Technology,
Chongqing 400054, P.R. China}
\email{wangaf@cqut.edu.cn}
\thanks{Corresponding author: Aifa Wang.}
\thanks{This work was supported by the Science and Technology Research Program of Chongqing
Municipal Education Commission (KJZD-K2024011102) and the Chongqing Natural
Science Foundation Innovation and Development Joint Fund (Municipal Education
Commission) (CSTB2025NSCQ-LZX0067).}
\date{}
\subjclass[2020]{16Y60, 03C05, 08B05, 08B15}
\keywords{Additively idempotent semiring, variety, identity, finite basis problem, semilinear set}

\begin{document}

\begin{abstract}
We prove that the three-element additively idempotent semiring $S_7$ is
strongly nonfinitely based: every finite additively idempotent semiring
whose generated variety contains $S_7$ has no finite equational basis.
This answers Problem~1.2 in Yue, Ren and Gao's paper \emph{Explicit
equational bases for the power semirings of $S_7$} affirmatively.
The proof combines a rigidity property of set systems with private
points and a probabilistic construction derived from semilinear sets
of letter counts in finite semigroups. The argument allows arbitrary
polynomial substitutions and does not assume commutative multiplication.
As consequences, both power semirings of the multiplicative reduct of
$S_7$, and both power semirings of every finite group of order at least
three, are strongly nonfinitely based. All algebras are considered in
the signature $(+,\cdot)$ without nullary operations.
\end{abstract}

\maketitle

\section{Introduction}

An \emph{additively idempotent semiring}, or \emph{ai-semiring}, is an
algebra $(R,+,\cdot)$ such that $(R,+)$ is a commutative idempotent
semigroup, $(R,\cdot)$ is a semigroup, and multiplication distributes
over addition on both sides. The finite basis problem asks which
algebras in a given class admit a finite basis for their identities.
For an algebra $A$, let $\V(A)$ denote the variety generated by $A$.
Thus $A$ is \emph{finitely based} if $\V(A)$ can be defined by finitely
many identities, and is \emph{nonfinitely based} otherwise.

The ai-semiring $\ssev$ has underlying set $\{1,a,\infty\}$. Its
addition is flat: the sum of two distinct elements is $\infty$.
Multiplication has identity $1$, zero $\infty$, and $a^2=\infty$.
The operations are displayed in Table~\ref{tab:s7}. The symbols $1$
and $\infty$ name elements of this particular algebra; they are not
constant symbols in the signature. Our $\infty$ is the element denoted
by $0$ in the Cayley tables of Yue, Ren and Gao~\cite{YRG}.

\begin{table}[ht]
\centering
\caption{The Cayley tables of $\ssev$.}
\label{tab:s7}
\renewcommand{\arraystretch}{1.12}
\begin{tabular}{c|ccc}
$+$ & $1$ & $a$ & $\infty$ \\
\hline
$1$ & $1$ & $\infty$ & $\infty$ \\
$a$ & $\infty$ & $a$ & $\infty$ \\
$\infty$ & $\infty$ & $\infty$ & $\infty$
\end{tabular}
\qquad
\begin{tabular}{c|ccc}
$\cdot$ & $1$ & $a$ & $\infty$ \\
\hline
$1$ & $1$ & $a$ & $\infty$ \\
$a$ & $a$ & $\infty$ & $\infty$ \\
$\infty$ & $\infty$ & $\infty$ & $\infty$
\end{tabular}
\end{table}

Jackson, Ren and Zhao~\cite[Corollary~5.1]{JRZ} proved that $\ssev$
is, up to isomorphism, the unique nonfinitely based ai-semiring of
order at most three. Wu, Ren and Zhao~\cite{WRZ} proved that
$S_7^0$, obtained by adjoining to $S_7$ an additive identity that is
also a multiplicative zero, is nonfinitely based. Gao, Jackson, Ren
and Zhao~\cite{GJRZ} established sufficient conditions for certain
ai-semiring varieties containing $S_7$ to be nonfinitely based.
The general transfer question is stated in~\cite{GJRZ} and recorded
as Problem~1.2 in~\cite{YRG}:
\begin{quote}
\itshape
Is every finite ai-semiring whose variety contains $S_7$
nonfinitely based?
\end{quote}
Following the terminology used there, a finite ai-semiring $A$ is
\emph{strongly nonfinitely based} if every finite ai-semiring $R$
with $A\in\V(R)$ is nonfinitely based. Our main result is the following.

\begin{theorem}\label{thm:main}
Let $R$ be a finite ai-semiring. If $\ssev\in\V(R)$, then $R$ is
nonfinitely based. Consequently, $\ssev$ is strongly nonfinitely based.
\end{theorem}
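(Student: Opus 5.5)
The plan is the standard compactness route to nonfinite basability. Fix a finite ai-semiring $R$ with $\ssev\in\V(R)$, and suppose $\V(R)$ had a finite basis $\Sigma$ in which every identity involves fewer than $n$ variables. For each large $n$ I will exhibit an ai-semiring $B_n$ (or an identity $u_n\approx v_n$) with two properties. First, $B_n$ satisfies every identity of $R$ in fewer than $n$ variables. Second, $B_n\notin\V(R)$; equivalently, $u_n\approx v_n$ fails in $\ssev$, and hence in $R$, yet follows from the $(n-1)$-variable identities of $R$. Since this holds for every $n$, no finite basis exists.

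For the identities I would use $S_7$-type identities on words. Recall that $\ssev$ evaluates a sum of words $\sum_i w_i$ according to which letters can be sent to $1$ or to $a$. A word evaluates to $a$ exactly when one letter occurring once is sent to $a$ and all other letters are sent to $1$. So $\ssev$ sees a sum of words through the set system of supports of the words, together with the set of letters occurring exactly once in each. The candidate identity is
\[
u_n=x_1x_2\cdots x_n+\sum_{i}(\text{words omitting a private point})\approx u_n+w,
\]
in the spirit of the known nonfinite basis proofs for $\ssev$ and $S_7^0$. It is built from a set system in which each set has a \emph{private point}. The rigidity property of such systems then shows that $\ssev$ separates the two sides: any substitution witnessing a derivation of $u_n\approx u_n+w$ would have to preserve the private points, and this is impossible.

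The heart of the argument is showing that $u_n\approx v_n$ does follow from the identities of $R$ in fewer than $n$ variables, with arbitrary polynomial substitutions allowed and without assuming commutative multiplication. Let $\sigma$ be a substitution into an identity $p\approx q$ of $R$ in $k<n$ variables. Each variable is replaced by a polynomial, that is, a sum of words. The finiteness of the multiplicative semigroup of $R$ enters through letter counts. For a finite semigroup, the set of count vectors $(|w|_{x_1},\dots,|w|_{x_m})$ of words $w$ that evaluate to a given element under a given assignment is a semilinear set. Because the periods are bounded in terms of $|R|$, a randomly chosen family of words of large length, with counts in suitable residue classes, behaves identically in $R$ while still realizing the required supports.

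Here I would use the probabilistic construction. Choose $u_n$ randomly among sums indexed by a large set system with private points. Then, with positive probability, every application of a fewer-than-$n$-variable identity either leaves the $\ssev$-profile unchanged or can be simulated inside $R$. The reason is that such an identity can only identify fewer than $n$ blocks of letters, and the private-point rigidity forces some block to behave uniformly.

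I expect the main obstacle to be exactly this step: controlling arbitrary polynomial substitutions in the noncommutative setting. Semilinearity gives eventual periodicity of counts, but one must also guarantee that no substitution merges two private points. My first attempt would be a union bound over the boundedly many "types" of substitution, with types defined by the semilinear data mod the period and by which private points land in which variable. The aim is to show that a random choice avoids all bad types once $n$ exceeds a bound depending on $|R|$. The second assertion of Theorem~\ref{thm:main} then follows immediately from the definition of strongly nonfinitely based.
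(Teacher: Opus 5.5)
There is a genuine gap, and it begins with the direction of the argument. You ask for an identity $u_n\approx v_n$ that fails in $\ssev$, hence in $R$, yet follows from the identities of $R$ in fewer than $n$ variables. No such identity exists, because every consequence of identities of $R$ holds in $R$. Your ``equivalently'' with $B_n$ is reversed in the same way: $B_n\notin\V(R)$ means that some identity holds in $R$ and fails in $B_n$. What you need is an identity that \emph{holds} in $R$ but does not follow from the bounded-variable identities. Since $\ssev\in\V(R)$ gives $\Id(R)\subseteq\Id(\ssev)$, it suffices to show that it does not follow from the larger set $\Id_n(\ssev)$. In particular $\ssev$ must satisfy the witness, not separate its two sides.

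Private-point rigidity is then used syntactically. Suppose $\F$ has members of size at least $k$, pairwise intersections at most $\ell$, and $k>(2^n-2)\ell$. Then one replacement step by an identity of $\ssev$ in at most $n$ variables, under an arbitrary polynomial substitution and inside an arbitrary context, sends a term whose words are all linear with supports in $\F$ to another such term. To see this, fix one word from each substituted polynomial. The at most $2^n-1$ monomials obtained from one side form a subfamily in which every member has a private point. Rigidity in $\ssev$ then forces every monomial produced from the other side to be one of them. This step is deterministic. No union bound over substitution ``types'' is involved, and none is available, since there is no bounded set of substitutions to range over.

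The semilinear and probabilistic machinery belongs to the other half: it is what makes the witness true in $R$. This is also where the known witnesses for $\ssev$ and $S_7^0$ do not suffice, since they need not hold in $R$. Apply semilinearity to the product languages $L_b$ over the alphabet $(R,\cdot)$ itself. The letter-count vector of any assignment $\nu:[N]\to R$ can be shrunk to a vector of total size between $k$ and $k+2B$ with the same product. Hence a random set of such a size realizes $\nu(1)\cdots\nu(N)$ in some ordering with probability at least $cN^{-C}$, where $C=rB$ does not depend on $k$. Now sample about $N^{C+1}$ sets and take union bounds over the $r^N$ assignments and over pairs of sampled sets, with the threshold $t=2C+3$ fixed before $k$ is made large. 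This produces $\F$ with both the covering property and the intersection property. Then $P=\sum_{E\in\F}\sum_{u\in\Sym(E)}u$ and $w=x_1\cdots x_N$ satisfy $R\models P+w\approx P$. But $P+w$ contains $w$, whose support has size $N$, larger than every member of $\F$, so $P+w$ cannot be reached from $P$ using $\Id_n(\ssev)$.

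Your proposal mentions several of these ingredients. However, it gives no concrete witness identity and no mechanism ensuring that the witness holds in $R$, and it assigns the tools to the wrong halves of the argument. As written it does not amount to a proof.
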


The quantification over finite generators is part of this statement.
It is distinct from inherent nonfinite basability, which excludes
membership in every finitely based locally finite variety. Our proof
uses the finiteness of the multiplicative reduct of $R$ to obtain
uniform bounds on its product languages.

The proof has two components. First, we show that identities of
$\ssev$ involving at most $n$ variables preserve polynomials whose
word supports belong to a family of large sets with sufficiently
small pairwise intersections. The key observation is that any small
subfamily has a private point in each of its members. Second, for
every finite semigroup we construct a family with these intersection
properties that realizes every full product using a suitable ordering
of one of its members. The construction uses semilinearity of letter
counts and a finite probabilistic argument. Taking all orderings of
each member produces an identity of $R$ that cannot follow from its
identities in a bounded number of variables.

Section~\ref{sec:prelim} fixes the notation. The substitution argument
is proved in Section~\ref{sec:private}, and the finite semigroup
construction in Section~\ref{sec:products}. Section~\ref{sec:main}
proves Theorem~\ref{thm:main} and gives consequences for power
semirings, including the two power semirings studied in~\cite{YRG}.

\section{Preliminaries}\label{sec:prelim}

Throughout, $\N=\{0,1,2,\ldots\}$ and $[N]=\{1,\ldots,N\}$ for
$N\geq1$. All logarithms are natural. Every algebra has signature
$(+,\cdot)$, with no nullary operations. A finite basis relative to
the ai-semiring axioms exists if and only if a finite basis in this
signature exists, since the ai-semiring axioms themselves form a
finite set.

Let $X$ be a countably infinite set of variables. Write $X^+$ for the
free semigroup on $X$, and $X^*=X^+\cup\{\varepsilon\}$ for the free
monoid. Every ai-semiring term expands, using distributivity, into a
finite nonempty sum of nonempty words. For a term $p$, denote its set
of expanded words by $\Wd(p)$. Recursively,
\begin{equation}\label{eq:expansion}
\Wd(x)=\{x\},\qquad
\Wd(p+q)=\Wd(p)\cup\Wd(q),\qquad
\Wd(pq)=\Wd(p)\Wd(q).
\end{equation}
Addition is represented by union, so repeated summands are discarded.
These finite nonempty word sets form the free ai-semiring;
see Ku\v{r}il and Pol\'{a}k~\cite[Theorem~2.5]{KP}.
In particular, the ai-semiring axioms preserve $\Wd(p)$.

For $u\in X^+$, let $\occ(x,u)$ be the number of occurrences of $x$
in $u$, and let $\supp(u)$ be its set of variables. The word $u$ is
\emph{linear} if $\occ(x,u)\leq1$ for every $x$. Its
\emph{commutative image} is
\[
\cm(u)=\prod_{x\in\supp(u)}x^{\occ(x,u)}.
\]
Thus $u$ is linear precisely when $\cm(u)$ is squarefree. We also
write $\cm(p)$ for the formal commutative polynomial obtained by
applying $\cm$ to all words in $\Wd(p)$ and discarding repetitions.
This notation records multiplicities of variables, but not their
order. It does not impose commutativity on an ai-semiring under
consideration. We use $\cm(\varepsilon)=1$ solely as notation for an
absent multiplicative context.

The natural order on an ai-semiring is
\[
b\preccurlyeq c\quad\Longleftrightarrow\quad b+c=c.
\]
For terms, $p\preccurlyeq q$ abbreviates the identity $p+q\approx q$.
In $\ssev$ the only element below $1$ is $1$, the only element below
$a$ is $a$, and every element is below $\infty$. A nonempty word
evaluated on $\{1,a\}$ has value $a$ exactly when it has one
occurrence assigned $a$; two or more such occurrences give $\infty$.

Write $\Id(A)$ for the identities of $A$, and $\Id_n(A)$ for those
whose two sides together involve at most $n$ variables. We use the
standard substitution and context formulation of equational
deduction; see~\cite{BS} or~\cite[Lemma~2.2]{YRG}. Namely, a
consequence is obtained by a finite sequence of replacements of a
substituted side of an identity by its other side, in either
direction and inside a term context. A substitution sends each
variable to a nonempty term, and hence to a nonempty polynomial of
nonempty words. Empty words are allowed only as omitted left or
right contexts, never as substitution values.

\section{Private points and identities in boundedly many variables}
\label{sec:private}

For a finite nonempty set $E\subseteq X$, put
$m_E=\prod_{x\in E}x$, a squarefree commutative monomial. If $\E$
is a finite nonempty family of such sets, put
$p_{\E}=\sum_{E\in\E}m_E$. Since multiplication in $\ssev$ is
commutative, these polynomials have unambiguous evaluations in
$\ssev$. The \emph{private points} of $E$ in $\E$ form the set
\[
D_E=E\setminus\bigcup_{F\in\E\setminus\{E\}}F.
\]
A set $Z\subseteq\bigcup\E$ is an \emph{exact transversal} of $\E$
if $|Z\cap E|=1$ for every $E\in\E$.
Exact transversals occur in the description of the identities of
$\ssev$ by Jackson, Ren and Zhao~\cite[Proposition~5.5]{JRZ};
see also~\cite[Lemma~1.2]{GJRZ}. The next lemma gives a direct
rigidity argument for families with private points.

\begin{lemma}\label{lem:private}
Suppose that $D_E\neq\varnothing$ for every $E\in\E$. For every
nonempty commutative monomial $m$,
\begin{equation}\label{eq:rigidity}
\ssev\models m\preccurlyeq p_{\E}
\quad\Longleftrightarrow\quad
m=m_E\text{ for some }E\in\E.
\end{equation}
Moreover, every point of $\bigcup\E$ belongs to an exact transversal
of $\E$.
\end{lemma}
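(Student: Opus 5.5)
The plan is to prove the second statement (exact transversals) first, because the hard direction of \eqref{eq:rigidity} reduces to finding a suitable exact transversal. For each $F\in\E$ fix a private point $d_F\in D_F$. Given $x\in\bigcup\E$, put
\[
Z_x=\{x\}\cup\{d_F: F\in\E,\ x\notin F\}.
\]
If $x\in E$, then $Z_x\cap E=\{x\}$, because each $d_F$ with $F\neq E$ lies in no member other than $F$. If $x\notin E$, then $Z_x\cap E=\{d_E\}$. So $Z_x$ is an exact transversal containing $x$. The same reasoning shows that any choice function $F\mapsto d_F\in D_F$ gives an exact transversal $\{d_F:F\in\E\}$.

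For \eqref{eq:rigidity}, the direction ``$\Leftarrow$'' is immediate: $m_E$ is a summand of $p_\E$, so $m_E+p_\E$ and $p_\E$ have the same word set. For ``$\Rightarrow$'' I argue by contraposition, exhibiting an assignment $\varphi$ into $\ssev$ with $m+p_\E\neq p_\E$. I will use two kinds of assignment.
\begin{enumerate}
\item If some variable $y$ of $m$ lies outside $\bigcup\E$, set $y\mapsto\infty$ and all other variables $\mapsto 1$. This gives $p_\E=1$ but $m+p_\E=\infty$.
\item For an exact transversal $Z$, send $Z$ to $a$ and every other variable to $1$. Then every $m_E$ takes value $a$, so $p_\E=a$. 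Hence $m+p_\E=a$ only if $m$ contains exactly one occurrence of a variable from $Z$.
\end{enumerate}
If $m$ has a repeated variable $x$, use (ii) with $Z=Z_x$: then $m=\infty$. So we may assume $m=m_S$ with $S\subseteq\bigcup\E$ and $S\notin\E$. It then suffices to find an exact transversal $Z$ with $|S\cap Z|\neq1$.

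\textbf{Case A: some $E\in\E$ satisfies $E\subsetneq S$.} Pick $y\in S\setminus E$. Then $y\notin E$, so $d_E\in Z_y$, and $d_E\in S$. Hence $Z_y\cap S\supseteq\{y,d_E\}$ has two elements.

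\textbf{Case B: no member of $\E$ is contained in $S$.} Suppose, for contradiction, that every exact transversal meets $S$ exactly once.
\begin{enumerate}
\item If some $D_F$ contained points both inside and outside $S$, swapping that single choice in a transversal $\{d_F\}$ would change $|S\cap Z|$ by one. So each $D_F$ is either contained in $S$ or disjoint from it.
\item Exactly one $F_0$ then has $D_{F_0}\subseteq S$. Since $F_0\not\subseteq S$, there is $z\in F_0\setminus S$.
\item In $Z_z$, every $d_F$ has $F\neq F_0$ (as $z\in F_0$), so $d_F\notin S$. Also $z\notin S$, so $Z_z\cap S=\varnothing$, contradicting the assumption.
\end{enumerate}

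The main obstacle is this last case analysis: choosing transversals so that a non-member $S$ is detected. The key step is forcing each $D_F$ to lie entirely inside or entirely outside $S$ by varying the choice of private point, and then re-centring the transversal at a point of $F_0\setminus S$. Everything else is direct evaluation in $\ssev$ using the description of values on $\{1,a\}$ from Section~\ref{sec:prelim}.
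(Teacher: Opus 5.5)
Your proof is correct and uses the same ingredients as the paper. These are the transversals formed by chosen private points, the move of varying a single private point (your Case B(i), which is the paper's step producing the constants $\alpha_E$ and the unique $E_*$), and the re-centred transversal $\{x\}\cup\{d_F:x\notin F\}$ (your Cases A and B(iii), which is the paper's computation of $\occ(x,m)$ at non-private points). The difference is only organizational: the paper argues directly with the exponent equations, which absorbs your separate treatment of repeated variables and of the two inclusions between $S$ and $E_*$ into one count, whereas you argue by contraposition with a case split.
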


\begin{proof}
The reverse implication in~\eqref{eq:rigidity} follows from
idempotent addition. Suppose that the inequality holds. If $m$
contained a variable outside $\bigcup\E$, assigning that variable
$\infty$ and all other variables $1$ would give $m=\infty$ and
$p_{\E}=1$, a contradiction. Thus $\supp(m)\subseteq\bigcup\E$.

Choose one point $z_E\in D_E$ independently for each $E\in\E$.
Assign $a$ to the chosen points and $1$ to all other variables.
Every monomial $m_E$ has value $a$, so $p_{\E}=a$ and the assumed
inequality forces $m=a$. Consequently,
\begin{equation}\label{eq:private-exponents}
\sum_{E\in\E}\occ(z_E,m)=1
\end{equation}
for every such choice. Varying $z_E$ with the other choices fixed
shows that the exponent of each point of $D_E$ is a common
nonnegative integer $\alpha_E$. By~\eqref{eq:private-exponents},
$\sum_{E\in\E}\alpha_E=1$. Hence there is a unique $E_*\in\E$
with $\alpha_{E_*}=1$, and all other $\alpha_E$ are zero.

Now let $x\in\bigcup\E$ be a point that is not private to any
member. Take $x$, together with one private point from each member
not containing $x$. This is an exact transversal. Assigning $a$ to
it and $1$ elsewhere gives
\[
\occ(x,m)+\sum_{\substack{E\in\E\\x\notin E}}\alpha_E=1.
\]
It follows that $\occ(x,m)=1$ if $x\in E_*$ and is zero otherwise.
Together with the exponents on the private points, this proves
$m=m_{E_*}$.

For the last assertion, the same construction works for any
$x\in\bigcup\E$, whether or not it is private: take $x$ and one
private point from each member not containing it.
\end{proof}

Call a term $p$ \emph{supported by $\F$} if every word in $\Wd(p)$
is linear and has support in $\F$. Different words may have the
same support.

\begin{lemma}\label{lem:stability}
Let $n,k\geq1$ and $\ell\geq0$ be integers. Let $\F$ be a finite
nonempty family of finite sets of variables such that
\begin{equation}\label{eq:intersection-condition}
|E|\geq k,\qquad |E\cap F|\leq\ell\quad(E\neq F),
\qquad k>(2^n-2)\ell.
\end{equation}
If $p$ is supported by $\F$ and $p\approx q$ follows from
$\Id_n(\ssev)$ and the ai-semiring axioms, then $q$ is supported
by $\F$.
\end{lemma}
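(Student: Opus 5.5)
We argue by induction on the length of the derivation of $p\approx q$. Since the ai-semiring axioms preserve $\Wd$, it suffices to treat a single replacement step. So suppose $q$ arises from $p$ by replacing an occurrence of $\sigma(s)$ by $\sigma(t)$ inside a term context $C$, where $s\approx t$ (or $t\approx s$; the argument is symmetric) lies in $\Id_n(\ssev)$ and $\sigma$ is a substitution. By~\eqref{eq:expansion}, $\Wd(q)$ consists of two kinds of words. The first kind are words of $C$ that do not pass through the hole; they already lie in $\Wd(p)$. The second kind are words $c\,w'\,d$, where $c,d\in X^*$ are left and right context words and $w'\in\Wd(\sigma(t))$. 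For the same pair $(c,d)$, every $c\,w\,d$ with $w\in\Wd(\sigma(s))$ lies in $\Wd(p)$, so it is linear with support in $\F$. Hence we only need the following: for fixed $c,d$ and fixed $w'\in\Wd(\sigma(t))$, the word $c\,w'\,d$ is linear and $\supp(c\,w'\,d)\in\F$.

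Write $w'=u(v)$, where $u\in\Wd(t)$ and each occurrence of a variable $x_i$ of $u$ is replaced by some word of $\Wd(\sigma(x_i))$. The key step is to pass to a \emph{thinned} substitution $\sigma'$. For each of the at most $n$ variables $x_i$ of $s\approx t$, choose a single word $v_i\in\Wd(\sigma(x_i))$ and put $\sigma'(x_i)=v_i$. Every word of $\Wd(\sigma'(s))$ is still a word of $\Wd(\sigma(s))$. Consequently each $c\,u_0(v)\,d$ with $u_0\in\Wd(s)$ is linear with support
\[
E_{U}=\supp(c)\cup\supp(d)\cup\bigcup_{x_i\in U}\supp(v_i)\in\F,
\]
where $U=\supp(u_0)$. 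This support depends only on the nonempty subset $U\subseteq\{x_1,\dots,x_n\}$. So the family $\E=\{E_U\}$ has at most $2^n-1$ members. Each member meets at most $2^n-2$ others, each in at most $\ell$ points. By~\eqref{eq:intersection-condition}, each member has at least $k>(2^n-2)\ell$ points, so every member of $\E$ has a private point. Since multiplication in $\ssev$ is commutative, $\ssev\models c\,\sigma'(s)\,d\approx p_{\E}$. From $\ssev\models s\approx t$ and $u\preccurlyeq t$ we get $\ssev\models \cm(c\,u(v)\,d)\preccurlyeq p_{\E}$. Lemma~\ref{lem:private} then gives $\cm(c\,u(v)\,d)=m_E$ for some $E\in\E\subseteq\F$.

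Two consequences follow. First, take any word $u\in\Wd(t)$, choose arbitrary $v_i$, and apply the above. The monomial $m_E$ is squarefree and each $v_i$ is nonempty, so $u$ is linear. Hence in $w'=u(v)$ each variable $x_i$ of $u$ occurs once and is replaced by exactly one word. Second, choose $v_i$ to be precisely the word used at that occurrence in $w'$; for variables not in $u$, choose arbitrarily. Then $c\,w'\,d=c\,u(v)\,d$, and the displayed conclusion says exactly that it is linear with support in $\F$. This completes the single step, and the induction finishes the proof.

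The main obstacle is the bookkeeping in this thinning step. One has to justify that evaluating in $\ssev$ with word-level inequality $u\preccurlyeq t$ legitimately transfers to the commutative monomial. One also has to check that the linearity of $u$ must be established before identifying $w'$ with a single-word substitution. The bound $2^n-1$ on the number of distinct supports is exactly what makes the private-point hypothesis of Lemma~\ref{lem:private} available from~\eqref{eq:intersection-condition}.
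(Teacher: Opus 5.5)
Your proof is correct and follows the paper's argument. You reduce to a single replacement between context words $c,d$, fix one substitution word $v_i$ per variable, bound the number of supports by $2^n-1$ to get private points, and apply Lemma~\ref{lem:private} via $u\preccurlyeq t$, using linearity of $u$ so that each output word comes from a consistent single-word substitution. The one real difference is how you get linearity of the words of $t$. You read it off the conclusion of Lemma~\ref{lem:private}: the monomial $\cm(c\,\sigma'(u)\,d)$ must be a squarefree $m_E$, which fails if $u$ repeats a variable because $v_i\neq\varepsilon$. The paper instead proves this by a separate exact-transversal evaluation; your shortcut is valid and also makes the paper's preliminary check that the words of $f$ are linear unnecessary.
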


\begin{proof}
We first prove the assertion for one replacement inside a fixed
multiplicative context. Let $f\approx g$ be an identity of $\ssev$
in at most $n$ variables, let $\phi$ be an arbitrary substitution,
and let $l,r\in X^*$. Suppose that $l\phi(f)r$ is supported by
$\F$. Put $c=\cm(lr)$. We shall prove that $l\phi(g)r$ is also
supported by $\F$.

The two sides of any identity of $\ssev$ have the same variable
content. Indeed, assigning $\infty$ to a variable occurring on only
one side and $1$ to all others gives values $\infty$ and $1$.
Let $y_1,\ldots,y_d$, with $d\leq n$, be this common content.

Every word in $\Wd(f)$ is linear. Otherwise some $y_i$ occurs
twice in such a word. Choosing the same nonempty word from
$\Wd(\phi(y_i))$ at those occurrences, and choosing arbitrary
words at the remaining occurrences, produces a non-linear word
in $\Wd(l\phi(f)r)$, a contradiction. In particular, $f$ has at
most $2^d-1\leq2^n-1$ distinct commutative images of words.

Fix one commutative monomial $m_i$ of $\cm(\phi(y_i))$ for each
$i$. All monomials of
\begin{equation}\label{eq:fixed-substitution}
c\,\cm(f)(m_1,\ldots,m_d)
\end{equation}
are squarefree and correspond to members of $\F$. Their distinct
supports form a nonempty subfamily $\E\subseteq\F$ with
$|\E|\leq2^n-1$. For every $E\in\E$,
\[
\left|E\cap\bigcup_{F\in\E\setminus\{E\}}F\right|
\leq(|\E|-1)\ell\leq(2^n-2)\ell<k\leq|E|.
\]
Thus every member of $\E$ has a private point. This observation
holds for every fixed choice of the monomials $m_i$.

We next show that every word in $\Wd(g)$ is linear. For this
purpose keep the current choice of the $m_i$. Fix $i$ and take
$z\in\supp(m_i)$, which is possible because substitution words
are nonempty. Since $y_i$ occurs in $f$, the product $cm_i$
divides a squarefree monomial in~\eqref{eq:fixed-substitution}.
Hence $z\notin\supp(c)$, and $m_i$ is squarefree.

There is an exact transversal $Z$ of $\E$ that contains $z$ and
avoids $\supp(c)$. If $\E$ has one member, take $Z=\{z\}$.
Otherwise $\supp(c)$ is contained in every member of $\E$, so
no private point belongs to $\supp(c)$. The construction at the
end of Lemma~\ref{lem:private} then gives the required $Z$.
Assign $a$ to variables in $Z$ and $1$ to all other variables.
Under this evaluation $c=1$ and every monomial
in~\eqref{eq:fixed-substitution} has value $a$.
Each $m_j$ divides at least one of these squarefree monomials,
so its value lies in $\{1,a\}$; the value of $m_i$ is $a$.

The values of the $m_j$ therefore define an assignment of
$y_1,\ldots,y_d$ under which $f=a$ and $y_i=a$. Since
$\ssev\models f\approx g$, this assignment also gives $g=a$.
If any word of $g$ repeated $y_i$, its value would be $\infty$,
and consequently $g$ would have value $\infty$. This is
impossible. Applying this argument for each $i$ proves that all
words of $g$ are linear.

Now take an arbitrary word in $\Wd(l\phi(g)r)$, and let $m$ be
its commutative image. It comes from a word $v\in\Wd(g)$ and
choices of words at its variable occurrences. Since $v$ is
linear, these choices give one monomial $m_i$ for each variable
appearing in $v$; choose $m_i$ arbitrarily for the other variables.
Thus
\[
m=c\,\cm(v)(m_1,\ldots,m_d).
\]
With this new fixed choice, the monomials of
$c\,\cm(f)(m_1,\ldots,m_d)$ again form $p_{\E}$ for a
subfamily $\E$ having a private point in every member. Because
$v$ is a summand of $g$ and $\ssev\models f\approx g$, we have
$\ssev\models v\preccurlyeq f$. Substitution and multiplication
by the context give
\[
\ssev\models
c\,\cm(v)(m_1,\ldots,m_d)
\preccurlyeq
c\,\cm(f)(m_1,\ldots,m_d)=p_{\E}.
\]
Lemma~\ref{lem:private} implies that $m=m_E$ for some
$E\in\E\subseteq\F$. This proves the assertion for the fixed
multiplicative context.

Finally, a term context with a single occurrence of a placeholder
$z$ expands as
\[
h+\sum_{j=1}^s l_jzr_j,
\]
where $s\geq1$, $l_j,r_j\in X^*$, and $h$ may be absent.
This follows by induction on the context using distributivity.
If the old expanded term is supported by $\F$, every word in
each $l_j\phi(f)r_j$ has that property. The argument above
applies to each $j$, and the summands of $h$ are unchanged.
It also applies with $f$ and $g$ interchanged. The ai-semiring
axioms preserve expanded word sets by~\eqref{eq:expansion}.
Induction on a sequence of equational replacements proves the
lemma.
\end{proof}

\begin{remark}\label{rem:stability}
The members of $\F$ need not have equal sizes. Moreover, the
proof permits arbitrary polynomial substitutions. The argument
establishing linearity of $g$ is what allows each output word to
be represented by a consistent choice of one substitution word
for each source variable.
\end{remark}

\section{Sparse realizations of products in finite semigroups}
\label{sec:products}

Let $S=\{s_1,\ldots,s_r\}$ be a finite semigroup, viewed also as
an alphabet. For $b\in S$, define
\[
L_b=\{u\in S^+ : \text{the product of the letters of $u$ in $S$ is $b$}\}.
\]
The \emph{Parikh vector} of $u\in S^*$ is the vector
$\Psi(u)=(|u|_{s_1},\ldots,|u|_{s_r})\in\N^r$ of its letter
counts. Put $P_b=\Psi(L_b)$.

A subset of $\N^r$ is \emph{linear} if it has the form
\begin{equation}\label{eq:linear-set}
\vect b+\N V
=\left\{\vect b+\sum_{j=1}^q n_j\vect v_j:n_j\in\N\right\},
\qquad V=\{\vect v_1,\ldots,\vect v_q\}\subseteq\N^r.
\end{equation}
The period set $V$ may be empty. A finite union of linear sets
is \emph{semilinear}. The semilinearity of Parikh images of regular
languages is a special case of Parikh's theorem~\cite{Parikh}.
We include an elementary regular-language argument for the property
needed below; background on regular expressions can be found
in~\cite{Conway}.

\begin{lemma}\label{lem:semilinear}
For every $b\in S$, the set $P_b$ is semilinear. Finite
representations of all these sets can be chosen together with
an integer $B\geq1$ such that every component
\eqref{eq:linear-set} in every representation satisfies
\begin{equation}\label{eq:B}
\nnorm{\vect b}+\sum_{j=1}^q\nnorm{\vect v_j}\leq B.
\end{equation}
The integer $B$ depends only on $S$.
\end{lemma}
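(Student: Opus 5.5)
The plan is to realise each $L_b$ as a regular language, convert it to a regular expression built from finitely many letters by union, concatenation and Kleene star, and then track Parikh images through these operations. Since $S$ is finite, the deterministic automaton with state set $S\cup\{\iota\}$ (initial state $\iota$, transition $t\cdot s=ts$, and $\iota\cdot s=s$) accepts $L_b$ when $b$ is the only final state. By Kleene's theorem (see~\cite{Conway}), each $L_b$ is denoted by a regular expression $e_b$. Only finitely many $b$ occur, so it suffices to show that every regular expression $e$ has a Parikh image with a semilinear representation in which all components satisfy~\eqref{eq:B} with a bound $B(e)$ depending only on $e$. We then take $B=\max_b B(e_b)$, which depends only on $S$ through the automaton.

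The argument is a structural induction on $e$, using three closure facts with explicit bounds.
\begin{enumerate}
\item For letters, $\Psi(s_i)$ is a single linear set with $q=0$, and $\emptyset$ is the empty union.
\item For union, the representation of $\Psi(e\cup e')$ is the union of the two representations, and the bound is $\max(B(e),B(e'))$.
\item For concatenation, $\Psi(ee')=\Psi(e)+\Psi(e')$. The sum of the linear sets $\vect b+\N V$ and $\vect b'+\N V'$ is $(\vect b+\vect b')+\N(V\cup V')$, so the bound is $B(e)+B(e')$.
\end{enumerate}

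The main step is the Kleene star. Since $\Psi(e^*)$ is the union over $k\ge0$ of $k$-fold sums of $\Psi(e)$, I would write $\Psi(e)=\bigcup_{i=1}^t(\vect b_i+\N V_i)$. An element of $\Psi(e^*)$ picks each component $i$ some number $k_i\ge0$ of times. Then
\[
\Psi(e^*)=\bigcup_{I\subseteq[t]}\Bigl(\sum_{i\in I}\vect b_i+\N\bigl(\textstyle\bigcup_{i\in I}(\{\vect b_i\}\cup V_i)\bigr)\Bigr).
\]
Here $I$ is the set of components used at least once ($I=\varnothing$ gives $\vect 0$). Each component used once contributes $\vect b_i$, further uses contribute extra copies of $\vect b_i$, and the periods $V_i$ become available once component $i$ is used. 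Both inclusions are immediate from this description. This is a finite union, and each part has bound at most $2\sum_i(\nnorm{\vect b_i}+\sum\nnorm{\vect v})\le 2tB(e)$, which depends only on $e$.

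The main obstacle is just this star case: one must check that the period $\vect b_i$ may be used only after $i\in I$, and the displayed identity handles this. The rest is bookkeeping. Alternatively, one can derive semilinearity from Parikh's theorem~\cite{Parikh} and take $B$ to be the maximum over the finitely many components in the chosen representations, since only finitely many sets $P_b$ and finitely many components are involved. This shortcut already yields the uniform $B$ directly.
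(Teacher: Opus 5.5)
Your proposal is correct and follows essentially the same route as the paper. It uses the same automaton on $S\cup\{\iota\}$, the passage to a regular expression, closure of semilinear sets under union, setwise sum and the additive star via exactly the formula~\eqref{eq:star}, and induction on the expression. The differences are cosmetic: you cite Kleene's theorem instead of writing out state elimination, and you track explicit bounds through the induction, which is harmless but unnecessary, since taking the maximum over the finitely many components (and $1$) already yields $B$.
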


\begin{proof}
The language $L_b$ is recognized by a finite automaton whose
states are a new initial state $*$ and the elements of $S$.
Reading $s_i$ at $*$ leads to $s_i$, and reading $s_i$ at a
state $t\in S$ leads to $ts_i$. The only accepting state is
$b$. The initial state is separate even when $S$ has an
identity, so the empty word is not accepted.

For completeness, a finite automaton yields a regular expression
by eliminating states. If $R_{ij}^{(h)}$ denotes an expression
for paths from state $i$ to state $j$ whose intermediate states
are among the first $h$ states, then
\[
R_{ij}^{(h)}=R_{ij}^{(h-1)}\cup
R_{ih}^{(h-1)}\bigl(R_{hh}^{(h-1)}\bigr)^*R_{hj}^{(h-1)}.
\]
Start with the direct transition labels and include the empty
word when $i=j$. This gives a finite regular expression for
$L_b$.

Parikh vectors turn concatenation into vector addition.
Semilinear sets are closed under finite unions and setwise
addition. They are also closed under the additive star, the
operation taking all finite sums of elements, including the
empty sum. Indeed, if
$A=\bigcup_{i=1}^h(\vect b_i+\N V_i)$, then
\begin{equation}\label{eq:star}
A^*=\bigcup_{I\subseteq[h]}
\left(
\sum_{i\in I}\vect b_i+
\N\left(\{\vect b_i:i\in I\}\cup\bigcup_{i\in I}V_i\right)
\right).
\end{equation}
The summand for $I=\varnothing$ is $\{\vect0\}$. To verify
the formula, let $I$ record the linear components used at least
once in a finite sum. Their first base vectors give
$\sum_{i\in I}\vect b_i$, and further base vectors and all
period vectors give the indicated nonnegative combinations.
Conversely, any such combination can be realized by using
component $i$ once plus as many additional times as its base
vector requires, and placing all its period vectors in one
of those occurrences.

The Parikh images of the atomic regular expressions are
semilinear. Induction on a regular expression, using the three
closure properties just proved, shows that $P_b$ is semilinear.
There are finitely many $b\in S$ and finitely many components
in their chosen representations. Taking the maximum of their
quantities in~\eqref{eq:B}, and of $1$, gives the required $B$.
\end{proof}

Fix representations and $B$ as in Lemma~\ref{lem:semilinear}.
For an assignment $\nu:[N]\to S$, say that a nonempty set $E\subseteq[N]$
\emph{realizes its full product} if some ordering
$i_1,\ldots,i_s$ of the distinct elements of $E$ satisfies
\begin{equation}\label{eq:realizes}
\nu(i_1)\cdots\nu(i_s)=\nu(1)\cdots\nu(N).
\end{equation}
The ordering is allowed to depend on the assignment.

\begin{lemma}\label{lem:probability}
Let $k\geq1$ be an integer, and put
\begin{equation}\label{eq:constants}
L=k+B,\qquad K=k+2B,\qquad C=rB,\qquad
c=\frac{1}{(2B+1)(4L)^K(K!)^r}.
\end{equation}
For every integer $N\geq4LK$, choose a random subset $E\subseteq[N]$ by first
choosing its size uniformly from $\{k,k+1,\ldots,K\}$ and then
choosing uniformly among subsets of that size. For every
assignment $\nu:[N]\to S$,
\begin{equation}\label{eq:probability}
\Pr\bigl(E\text{ realizes the full product of }\nu\bigr)
\geq cN^{-C}.
\end{equation}
In particular, the exponent $C$ is independent of $k$.
\end{lemma}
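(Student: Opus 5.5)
Write $\beta=\nu(1)\cdots\nu(N)$ and $n_\sigma=|\nu^{-1}(\sigma)|$ for $\sigma\in S$. The plan is to find one target Parikh vector $\vect w\in P_\beta$ with $\vect w\leq\vect n$ componentwise and $k\leq\nnorm{\vect w}\leq L$, chosen so that a random set of size $\nnorm{\vect w}$ hits the colour profile $\vect w$ with probability at least polynomial in $N^{-1}$. Any $E$ with $|E\cap\nu^{-1}(\sigma)|=w_\sigma$ for all $\sigma$ realizes its full product. Indeed, $\vect w\in P_\beta=\Psi(L_\beta)$, so some word in $L_\beta$ has Parikh vector $\vect w$. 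Listing the elements of $E$ in the order of the letters of that word gives an ordering satisfying~\eqref{eq:realizes}.

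\emph{Choice of $\vect w$.} The word $\nu(1)\cdots\nu(N)$ lies in $L_\beta$, so $\vect n\in P_\beta$. By Lemma~\ref{lem:semilinear}, $\vect n=\vect b+\sum_j n_j\vect v_j$ for some component satisfying~\eqref{eq:B}. Since $N=\nnorm{\vect n}$, we have $\sum_j n_j\nnorm{\vect v_j}\geq N-B$, with at most $B$ nonzero periods of norm at most $B$. Hence there is a period $\vect v_{j^*}\neq\vect0$ with $n_{j^*}\geq(N-B)/B^2$. I will check that this is at least $N/(2B^2)$, and that this in turn is at least $k$, once $N\geq4LK$. For every colour $\sigma\in\supp\vect v_{j^*}$ we get $n_\sigma\geq n_{j^*}$. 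Put $\vect w=\vect b+t\vect v_{j^*}$, where $t\geq0$ is the least integer with $\nnorm{\vect w}\geq k$. Then $t\leq k\leq n_{j^*}$, so $\vect w\leq\vect n$ and $\vect w\in P_\beta$. Each step adds at most $B$, so $s:=\nnorm{\vect w}\leq\max(B,k+B)\leq L\leq K$. Moreover, colours outside $\supp\vect v_{j^*}$ (call them small) have $w_\sigma\leq b_\sigma$, hence total weight at most $B$.

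\emph{Counting.} The size $s$ is selected with probability $1/(K-k+1)=1/(2B+1)$. Conditional on that, the probability of profile $\vect w$ is
\[
\prod_\sigma\binom{n_\sigma}{w_\sigma}\Big/\binom{N}{s}.
\]
I use $\binom{N}{s}\leq N^s$ and $\binom{n_\sigma}{w_\sigma}\geq1$ for small colours. For large colours I use
\[
\binom{n_\sigma}{w_\sigma}\geq\frac{(n_\sigma-w_\sigma)^{w_\sigma}}{w_\sigma!}\geq\frac{(N/(4L))^{w_\sigma}}{K!}.
\]
The middle inequality requires $n_\sigma-L\geq N/(4L)$, and this is where the hypothesis $N\geq4LK$, together with the size of $L$ relative to $B$, must be used. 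The powers of $N$ then cancel except for the small-colour weight, at most $B$. This leaves a bound of the form
\[
\frac{1}{2B+1}\,(4L)^{-K}(K!)^{-r}N^{-B}\geq cN^{-rB}.
\]

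\emph{Main obstacle.} The delicate step is choosing $\vect w$ so that almost all its weight lies on colours of linear frequency. A naive choice that uses rare colours many times would lose a factor $N^{-1}$ per element and give an exponent depending on $k$. Concentrating the padding on a single heavily used period $\vect v_{j^*}$ solves this. The remaining work is checking the elementary inequalities, namely $n_{j^*}\geq k$ and $n_\sigma-w_\sigma\geq N/(4L)$, from $N\geq4LK$. If the constant $4L$ turns out to be too tight there, I would absorb the slack into the factor $(4L)^{-K}$ or into the surplus exponent, since $C=rB\geq B$.
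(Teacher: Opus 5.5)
Your single-period padding is a genuinely different route from the paper's, and it can be made to work. However, the quantitative step it rests on fails as written. From $\nnorm{\vect b}+\sum_j\nnorm{\vect v_j}\le B$ you extract only $n_{j^*}\ge(N-B)/B^2$, and neither deferred check follows from this together with $N\ge4LK$.

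\begin{enumerate}
\item The claim $N/(2B^2)\ge k$ needs $N\ge2B^2k$, which $4LK=4(k+B)(k+2B)$ does not give. For $k=20$, $B=100$ and $N=4LK=105600$, your bound yields only $n_{j^*}\ge11<k$. So $t\le n_{j^*}$, and with it $\vect w\le\vect n$, is not established; $t$ can equal $k$.
\item The inequality $n_\sigma-w_\sigma\ge N/(4L)$ cannot be derived from $n_\sigma\ge(N-B)/B^2$ whenever $B^2>4L$. For $k=1$, $B=10$ and $N=924$, the bound gives $n_\sigma\ge10$. Since $w_\sigma$ may be as large as $L=11$, this yields nothing better than $n_\sigma-w_\sigma\ge-1$, against the required $21$.
\end{enumerate}

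Your fallback does not help. The constant $c$ is fixed in the statement, so losses cannot be moved into $(4L)^{-K}$. The surplus factor $N^{(r-1)B}$ is trivial when $r=1$. In any case, no constant slack compensates for a nonpositive lower bound on $n_\sigma-w_\sigma$.

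The repair is a sharper pigeonhole. Let $j^*$ maximize $n_j$ over the nonzero periods. Then
\[
N-\nnorm{\vect b}=\sum_j n_j\nnorm{\vect v_j}\le n_{j^*}\sum_j\nnorm{\vect v_j}\le Bn_{j^*},
\]
so $n_{j^*}\ge N/B-1$. Since $K\ge2B$, we have $N\ge4LK\ge8LB$, hence $n_{j^*}\ge8L-1\ge k\ge t$. Since $L>B$, every $\sigma\in\supp\vect v_{j^*}$ also satisfies
\[
n_\sigma-w_\sigma\ge\frac{N}{B}-1-L\ge\frac{N}{4L},
\]
because $N/B-N/(4L)\ge3N/(4B)\ge6L\ge L+1$. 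With this correction the rest of your argument goes through as written. It yields the lower bound $\frac{1}{2B+1}(4L)^{-K}(K!)^{-r}N^{-B}\ge cN^{-C}$.

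For comparison, the paper scales every coefficient by $\alpha=L/N$ and rounds down. This makes $\vect0\le\vect M\le\vect N$ automatic and keeps $\nnorm{\vect M}$ within $B$ of $L$. It then calls a colour rare if $N_i<N/(2L)$ and shows that each rare colour receives at most $B$ positions, so the rare weight is at most $rB$. Your version puts all the padding on one heavy period. Only colours outside $\supp\vect v_{j^*}$ can be infrequent, and they receive just the entries of $\vect b$. This improves the exponent from $rB$ to $B$. The price is exactly the pigeonhole estimate above, which you need both for $\vect w\le\vect n$ and for the frequency bound on $\supp\vect v_{j^*}$.
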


\begin{proof}
Let $b=\nu(1)\cdots\nu(N)$, and let
$\vect N=(N_1,\ldots,N_r)$ be the count vector of the assignment.
Then $\vect N\in P_b$, so it belongs to one of the chosen
linear components:
\[
\vect N=\vect b+\sum_{j=1}^q n_j\vect v_j.
\]
Put $\alpha=L/N\leq1$ and define
\begin{equation}\label{eq:rounding}
\vect M=\vect b+\sum_{j=1}^q\lfloor\alpha n_j\rfloor\vect v_j.
\end{equation}
Since $0\leq\lfloor\alpha n_j\rfloor\leq n_j$, we have
$\vect0\leq\vect M\leq\vect N$ coordinatewise. Moreover,
$\vect M\in P_b$ and
\begin{align*}
\nnorm{\vect M-\alpha\vect N}
&\leq(1-\alpha)\nnorm{\vect b}
  +\sum_{j=1}^q\nnorm{\vect v_j}\\
&\leq B.
\end{align*}
Because $\nnorm{\alpha\vect N}=L$, the integer
$s=\nnorm{\vect M}$ lies between $k=L-B$ and $K=L+B$.

There is a word over $S$ with count vector $\vect M$ and product
$b$. Any set of positions with exactly $M_i$ occurrences of
$s_i$ can be ordered to spell this word: match its occurrences
of each letter to the chosen positions with that value.
Thus every set with count vector $\vect M$ realizes the full
product. Conditional on $|E|=s$, the probability that $E$
has exactly this count vector is
\begin{equation}\label{eq:hypergeometric}
\frac{\prod_{i=1}^r\binom{N_i}{M_i}}{\binom Ns}.
\end{equation}
Consequently, this expression is a lower bound for the
conditional probability that $E$ realizes the full product.

Call an index $i$ rare if $N_i<N/(2L)$. The bound on the
rounding error gives
\[
M_i\leq\alpha N_i+B<B+\tfrac12
\]
for every rare $i$. Hence $M_i\leq B$, and the number of
selected positions with rare values is
$d=\sum_{i\text{ rare}}M_i\leq rB=C$.
For a nonrare $i$, the hypothesis $N\geq4LK$ implies
\mbox{$N_i\geq2K\geq2M_i$}. Consequently,
\[
\binom{N_i}{M_i}
=\frac{N_i(N_i-1)\cdots(N_i-M_i+1)}{M_i!}
\geq\frac{(N/(4L))^{M_i}}{M_i!},
\]
with the usual value $1$ when $M_i=0$.
For rare indices use $\binom{N_i}{M_i}\geq1$.
Together with $\binom Ns\leq N^s$, this bounds
\eqref{eq:hypergeometric} below by
\[
\frac{N^{s-d}}{(4L)^{s-d}(K!)^rN^s}
\geq (4L)^{-K}(K!)^{-r}N^{-C}.
\]
Finally, the probability of choosing the size $s$ is
$1/(2B+1)$. This proves~\eqref{eq:probability}.
\end{proof}

We now impose coverage and small pairwise intersections
simultaneously by independent sampling and the union bound.
These are standard tools of the probabilistic method;
see Alon and Spencer~\cite{AS} for general background.

\begin{lemma}\label{lem:cover}
For every finite semigroup $S$ and every integer $n\geq1$,
there exist integers $N>K\geq k\geq1$ and $\ell\geq0$, and a
finite nonempty family $\F\subseteq\Pow([N])$, such that
\begin{enumerate}
\item $k\leq|E|\leq K$ for every $E\in\F$;
\item $|E\cap F|\leq\ell$ for distinct $E,F\in\F$, and
      $k>(2^n-2)\ell$;
\item for every assignment $\nu:[N]\to S$, some $E\in\F$
      realizes its full product.
\end{enumerate}
\end{lemma}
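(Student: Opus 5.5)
We will use the probabilistic method, combining Lemma~\ref{lem:probability} with a union bound. The parameter $\ell$ will be a fixed constant depending only on $C=rB$ and $n$. The parameter $k$ will be large compared with $\ell$, and $N$ large compared with everything else.

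First, fix $\ell$ and $k$. Set $\ell=2C+1$, or more generally any $\ell$ with $\ell+1>2C$. Then choose $k>(2^n-2)\ell$ with $k\geq1$. Put $L,K,c$ as in~\eqref{eq:constants}. Note that $C$ does not depend on $k$ by Lemma~\ref{lem:probability}, so choosing $k$ after $\ell$ causes no circularity. Condition~(i) will hold automatically, since each sampled set has size between $k$ and $K$.

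Second, sample $T$ independent random sets $E_1,\dots,E_T\subseteq[N]$ from the distribution of Lemma~\ref{lem:probability}, with $N\geq4LK$ and $T=\lceil N^{C}\cdot 2N\log|S|/c\rceil$.

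For coverage, fix an assignment $\nu$. By~\eqref{eq:probability}, the probability that no $E_j$ realizes its full product is at most $(1-cN^{-C})^T\leq e^{-cN^{-C}T}\leq |S|^{-2N}$. Summing over the $|S|^N$ assignments, the probability that coverage fails is at most $|S|^{-N}<1/2$ for large $N$. (If $|S|=1$, coverage is trivial.)

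For intersections, fix $j\neq j'$. Condition on $E_j$, so $|E_j|\leq K$. For a uniformly random set of size $s\leq K$, the probability that it meets a fixed $K$-set in at least $\ell+1$ points is at most $\binom{K}{\ell+1}(K/N)^{\ell+1}$. Hence the expected number of bad pairs is at most $T^2K^{2(\ell+1)}N^{-(\ell+1)}=O\bigl(N^{2C+2}(\log N)^2N^{-(\ell+1)}\bigr)$. This is where the exponent matters. We need $\ell+1>2C+2$, so we take $\ell=2C+2$ rather than $2C+1$, and then the bound is $o(1)$.

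With these choices, for large $N$ there is an outcome with full coverage and no bad pairs. We let $\F$ be the set of distinct $E_j$ in that outcome. If two samples coincide, that coinciding pair is itself a bad pair, since their intersection has size at least $k>\ell$, so it is excluded. Finally we take $N>K$.

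The main obstacle is balancing the exponents. The number of samples needed for coverage grows like $N^{C}\cdot N$, while the pair-collision probability decays like $N^{-(\ell+1)}$, so $\ell$ must exceed roughly $2C+1$. Such an $\ell$ is acceptable only because the exponent $C$ in Lemma~\ref{lem:probability} is independent of $k$. That independence lets us first fix $\ell$ in terms of $S$ alone, then choose $k>(2^n-2)\ell$, and only then let $N\to\infty$. We will double-check the factor $N$ in $T$ coming from $\log(|S|^N)$, and make sure it is absorbed by taking $\ell=2C+2$.
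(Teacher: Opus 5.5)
After your correction to $\ell=2C+2$, this is essentially the paper's proof. The paper likewise samples $\lceil AN^{C+1}\rceil$ sets from the distribution of Lemma~\ref{lem:probability}, takes intersection threshold $t=2C+3=\ell+1$ and $k>(2^n-2)\ell$ fixed before $N$, and controls coverage and pairwise intersections by union bounds as you do. Two small slips are easily repaired: for $|S|=1$ your formula gives $T=0$, so you must force $T\geq1$ (the paper uses $\log r+2$ in place of $\log r$), and for $n=1$ your choice need not give $k>\ell$ (the paper takes $k\geq t$); your distinctness remark is not needed anyway, since distinct members of $\F$ come from different indices $j\neq j'$.
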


\begin{proof}
Use $r$, $B$ and $C=rB$ as above. Choose, in this order,
\begin{equation}\label{eq:choose-k}
t=2C+3,\qquad \ell=t-1,\qquad
k=\max\{t,(2^n-2)(t-1)+1\}.
\end{equation}
Let $L,K,c$ be as in~\eqref{eq:constants}, and put
\[
A=\frac{\log r+2}{c},\qquad
M=\lceil AN^{C+1}\rceil.
\]
Independently sample $M$ sets using the distribution in
Lemma~\ref{lem:probability}.

For a fixed assignment $\nu$, the probability that no sampled
set realizes its full product is at most
\[
(1-cN^{-C})^M\leq \exp(-cN^{-C}M)
\leq\exp(-cAN).
\]
There are $r^N$ assignments. The union bound therefore shows
that the probability that condition~\textup{(iii)} fails is at most
\begin{equation}\label{eq:coverage-failure}
r^N\exp(-cAN)=\exp(-2N).
\end{equation}

We also estimate intersections of two independently sampled
sets $E$ and $F$. Given $E$ and $|F|=s$, for a fixed
$t$-element subset $T\subseteq E$,
\[
\Pr(T\subseteq F\mid E,|F|=s)
=\frac{\binom{N-t}{s-t}}{\binom Ns}
\leq\left(\frac{K}{N-t+1}\right)^t.
\]
Since $|E|\leq K$, another union bound, followed by averaging
over the sizes, gives for $N\geq2t$
\begin{equation}\label{eq:pair-failure}
\Pr(|E\cap F|\geq t)\leq b_0N^{-t},
\qquad b_0=\binom Kt(2K)^t.
\end{equation}
It follows that the probability of any pair of sampled sets
having intersection of size at least $t$ is at most
\begin{equation}\label{eq:all-pairs-failure}
\binom M2 b_0N^{-t}
\leq \frac{b_0(A+1)^2}{2N},
\end{equation}
where we used $M\leq(A+1)N^{C+1}$ and $t=2C+3$.

The quantities $r,B,C,t,\ell,k,L,K,c,A,b_0$ are independent
of $N$. Choose an integer $N$ with
\[
N\geq\max\{4LK,2t,K+1,2\},\qquad N>b_0(A+1)^2.
\]
Then~\eqref{eq:coverage-failure} and
\eqref{eq:all-pairs-failure} are each less than $1/2$.
With positive probability neither failure occurs. The resulting
sets are all distinct: a repeated set would have intersection
at least $k\geq t$ with its other occurrence. Taking them as
$\F$ proves~\textup{(i)} and~\textup{(iii)}, and the choices
in~\eqref{eq:choose-k} give~\textup{(ii)}.
\end{proof}

\begin{remark}\label{rem:order}
Semilinearity gives a short count vector representing the same
product, but does not preserve the original order of the selected
positions. Accordingly, condition~\textup{(iii)} allows all
orderings of a member of $\F$. Also, the interval of permitted
sizes $[k,k+2B]$ accommodates the rounding error in
\eqref{eq:rounding}; an exact prescribed size is not required.
\end{remark}

\section{The main theorem and power semirings}\label{sec:main}

For a finite nonempty set $E$ of variables, let $\Sym(E)$ be
the set of all words that use each variable of $E$ exactly once.

\begin{proof}[Proof of Theorem~\ref{thm:main}]
Suppose, to the contrary, that $R$ has a finite identity basis
$\Sigma$ relative to the ai-semiring axioms. Choose $n\geq1$
at least as large as the number of variables in any identity of
$\Sigma$. Since $\ssev\in\V(R)$, every identity of $R$ holds
in $\ssev$, and therefore
\[
\Sigma\subseteq\Id_n(\ssev).
\]

Apply Lemma~\ref{lem:cover} to the multiplicative semigroup
$(R,\cdot)$ and to $n$. Identify a position $i\in[N]$ with a
distinct variable $x_i$, and use the same notation $\F$ for
the resulting family of sets of variables. Define
\begin{equation}\label{eq:obstruction}
P=\sum_{E\in\F}\ \sum_{u\in\Sym(E)}u,
\qquad w=x_1x_2\cdots x_N.
\end{equation}
For every assignment of these variables to $R$,
Lemma~\ref{lem:cover}\textup{(iii)} supplies a member $E$ and
an ordering of it whose product equals the value of $w$.
That ordering occurs as a summand of $P$. Idempotent addition
then gives
\begin{equation}\label{eq:witness}
R\models P+w\approx P.
\end{equation}

The term $P$ is supported by $\F$. The intersection and size
conditions in Lemma~\ref{lem:cover} are exactly the hypotheses
of Lemma~\ref{lem:stability}. Thus every term equated to $P$
using $\Sigma$ and the ai-semiring axioms must also be
supported by $\F$. However, $P+w$ has the word $w$ with
support of size $N>K$. This support is not a member of $\F$.
Hence~\eqref{eq:witness} cannot follow from $\Sigma$,
contradicting that $\Sigma$ is an identity basis of $R$.
\end{proof}

The construction gives a slightly more precise separation
statement, independently of the variety-containment hypothesis.

\begin{corollary}\label{cor:separation}
For every finite ai-semiring $R$ and every $n\geq1$, there is
an identity of $R$ that is not a consequence of
$\Id_n(\ssev)$ together with the ai-semiring axioms.
\end{corollary}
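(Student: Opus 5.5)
The plan is to rerun the proof of Theorem~\ref{thm:main} while dropping the assumption $\ssev\in\V(R)$. That assumption was used only to place a finite basis $\Sigma$ of $R$ inside $\Id_n(\ssev)$. The rest of the proof shows directly that a particular identity of $R$ does not follow from $\Id_n(\ssev)$ and the ai-semiring axioms. So, given a finite ai-semiring $R$ and $n\geq1$, I apply Lemma~\ref{lem:cover} to the finite multiplicative semigroup $(R,\cdot)$ with this $n$. This yields integers $N>K\geq k$, an integer $\ell$, and a family $\F\subseteq\Pow([N])$ satisfying conditions (i)--(iii). I then identify each position $i$ with a variable $x_i$ and form $P$ and $w=x_1\cdots x_N$ as in~\eqref{eq:obstruction}.

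The first step is to check that $R\models P+w\approx P$, exactly as in~\eqref{eq:witness}. Take any assignment $\nu$ of the $x_i$ into $R$. Lemma~\ref{lem:cover}(iii) supplies some $E\in\F$ and an ordering of $E$ whose product equals the value of $w$. That ordered word lies in $\Sym(E)$ and is therefore a summand of $P$. Since addition is idempotent, adding the value of $w$ to the value of $P$ leaves it unchanged.

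The second step is to show that this identity is not derivable from $\Id_n(\ssev)$ and the ai-semiring axioms. Every word of $P$ is linear with support in $\F$, so $P$ is supported by $\F$. Conditions (i) and (ii) give $|E|\geq k$, $|E\cap F|\leq\ell$ and $k>(2^n-2)\ell$, which are the hypotheses~\eqref{eq:intersection-condition} of Lemma~\ref{lem:stability}. That lemma then says every term provably equal to $P$ is supported by $\F$. But $\Wd(P+w)$ contains $w$, whose support has $N>K$ elements, and by (i) no member of $\F$ has more than $K$ elements. Hence $P+w$ is not supported by $\F$, and the identity $P+w\approx P$ is not a consequence.

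I expect no real obstacle here, since both lemmas were stated without any reference to $\V(R)$. The only point to check is that Lemma~\ref{lem:cover} needs nothing from $S$ beyond its finiteness. This holds: the semilinearity bound $B$ in Lemma~\ref{lem:semilinear} depends only on $S$, and the probabilistic construction uses only $r=|S|$ and $B$.
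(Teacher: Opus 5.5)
Your proposal is correct and is essentially the paper's own proof. You build $P$ and $w$ from $(R,\cdot)$ via Lemma~\ref{lem:cover}, obtain $R\models P+w\approx P$ from condition~\textup{(iii)}, and use Lemma~\ref{lem:stability} together with $N>K$ to rule out a derivation from $\Id_n(\ssev)$ and the ai-semiring axioms. Your observation that $\ssev\in\V(R)$ is used in Theorem~\ref{thm:main} only to place $\Sigma$ inside $\Id_n(\ssev)$ is exactly why the paper can state this result without that hypothesis.
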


\begin{proof}
Construct $P$ and $w$ from the multiplicative semigroup of $R$
as in~\eqref{eq:obstruction}. The identity $P+w\approx P$
holds in $R$ by Lemma~\ref{lem:cover}, and
Lemma~\ref{lem:stability} excludes its derivation from
$\Id_n(\ssev)$.
\end{proof}

We now give consequences for the powerset construction.
For a semigroup $S$, let $\Pow(S)$ be its full power semiring
and let $\Pow^+(S)$ be its nonempty power semiring. Addition
is union and multiplication is setwise multiplication:
\[
A+B=A\cup B,\qquad AB=\{ab:a\in A,\ b\in B\}.
\]
Thus $\Pow^+(S)$ is a subalgebra of $\Pow(S)$. When $A$ is
an ai-semiring, the notation $\Pow(A,\cdot)$ and
$\Pow^+(A,\cdot)$ specifies that the powerset construction is
applied to its multiplicative reduct.

\begin{proposition}\label{prop:upward}
If $A$ is a finite ai-semiring with $\ssev\in\V(A)$, then
$A$ is strongly nonfinitely based.
\end{proposition}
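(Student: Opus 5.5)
This follows from Theorem~\ref{thm:main} by transitivity of variety membership. By definition, $A$ is strongly nonfinitely based when every finite ai-semiring $R$ with $A\in\V(R)$ is nonfinitely based. So fix such a finite ai-semiring $R$ with $A\in\V(R)$. The goal is to show that $\ssev\in\V(R)$, after which Theorem~\ref{thm:main} applies to $R$ and shows it is nonfinitely based.

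To obtain $\ssev\in\V(R)$, I would argue on identities. Since $A\in\V(R)$, every identity of $R$ holds in $A$, so $\Id(R)\subseteq\Id(A)$. Since $\ssev\in\V(A)$, likewise $\Id(A)\subseteq\Id(\ssev)$. Hence $\Id(R)\subseteq\Id(\ssev)$. By Birkhoff's theorem this gives $\ssev\in\V(R)$. Equivalently, $\V(A)\subseteq\V(R)$ because $\V(R)$ is a variety containing $A$, and then $\ssev\in\V(A)\subseteq\V(R)$.

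Applying Theorem~\ref{thm:main} to $R$ now shows that $R$ is nonfinitely based. Since $R$ was an arbitrary finite ai-semiring whose variety contains $A$, the semiring $A$ is strongly nonfinitely based. I see no substantive obstacle. The only point to check is that the definition quantifies over finite ai-semirings in the same signature, which matches the hypothesis of Theorem~\ref{thm:main}. Taking $R=A$ also shows in particular that $A$ itself is nonfinitely based.
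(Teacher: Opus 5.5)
Your proposal is correct and follows the same approach as the paper: for any finite $R$ with $A\in\V(R)$ you get $\ssev\in\V(A)\subseteq\V(R)$, and Theorem~\ref{thm:main} then shows $R$ is nonfinitely based. Your detour through $\Id(R)\subseteq\Id(A)\subseteq\Id(\ssev)$ and Birkhoff's theorem is just an equivalent way of justifying the inclusion $\V(A)\subseteq\V(R)$, which the paper uses directly.
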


\begin{proof}
If $T$ is finite and $A\in\V(T)$, then
$\ssev\in\V(A)\subseteq\V(T)$. By Theorem~\ref{thm:main},
$T$ is nonfinitely based, as required.
\end{proof}

\begin{proposition}\label{prop:powers}
Let $A$ be a finite ai-semiring. The map
\[
\pi:\Pow^+(A,\cdot)\longrightarrow A,
\qquad U\longmapsto\sum_{a\in U}a
\]
is a surjective homomorphism. Consequently, if
$\ssev\in\V(A)$, both $\Pow^+(A,\cdot)$ and
$\Pow(A,\cdot)$ are strongly nonfinitely based.
\end{proposition}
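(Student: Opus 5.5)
We check the homomorphism property directly and then apply Proposition~\ref{prop:upward}. For nonempty finite $U,W\subseteq A$, set $\pi(U)=\sum_{a\in U}a$. This sum is well defined because $U$ is finite and nonempty and $(A,+)$ is a commutative idempotent semigroup.

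Additivity comes from idempotence and commutativity of $+$. Splitting $U\cup W$ gives
\[
\pi(U\cup W)=\sum_{a\in U\cup W}a=\sum_{a\in U}a+\sum_{b\in W}b,
\]
since an element lying in both $U$ and $W$ is absorbed by $a+a=a$. Multiplicativity comes from the two distributive laws. Expanding gives
\[
\pi(U)\pi(W)=\sum_{a\in U}\sum_{b\in W}ab .
\]
By idempotence this double sum equals the sum over the set $\{ab:a\in U,\,b\in W\}=UW$, because repeated products are absorbed. Hence $\pi(U)\pi(W)=\pi(UW)$. Surjectivity is immediate from $\pi(\{a\})=a$.

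Next we pass to varieties. Since $A$ is a homomorphic image of $\Pow^+(A,\cdot)$, we have $A\in\V(\Pow^+(A,\cdot))$. Therefore
\[
\ssev\in\V(A)\subseteq\V(\Pow^+(A,\cdot)).
\]
Also $\Pow^+(A,\cdot)$ is a subalgebra of $\Pow(A,\cdot)$, so $\ssev\in\V(\Pow(A,\cdot))$ as well. Both power semirings are finite ai-semirings. Indeed, union is a commutative idempotent semigroup operation, and setwise product is associative and distributes over union. The empty set causes no problem: it is absorbing for the product, and $\varnothing\cup U=U$. Proposition~\ref{prop:upward}, applied to each of the two power semirings, yields the conclusion.

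The only point needing care is multiplicativity of $\pi$. It requires both distributive laws in $A$, together with idempotence to discard duplicate products. No commutativity of multiplication is needed, since the products $ab$ keep their order.
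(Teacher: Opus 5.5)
Your proof is correct and follows the paper's argument: you verify that $\pi$ is a surjective homomorphism using idempotence and both distributive laws, deduce $\ssev\in\V(A)\subseteq\V(\Pow^+(A,\cdot))\subseteq\V(\Pow(A,\cdot))$, and apply Proposition~\ref{prop:upward} to each power semiring. Your extra check that $\Pow(A,\cdot)$ is an ai-semiring even with the empty set included is a detail the paper leaves implicit.
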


\begin{proof}
The map is surjective because $\pi(\{a\})=a$.
Associativity, commutativity and idempotence of addition give
$\pi(U\cup V)=\pi(U)+\pi(V)$. Distributivity and idempotence
give
\[
\pi(U)\pi(V)=\sum_{u\in U,\ v\in V}uv
=\sum_{b\in UV}b=\pi(UV).
\]
It follows that
\[
A\in\V\bigl(\Pow^+(A,\cdot)\bigr)
\subseteq\V\bigl(\Pow(A,\cdot)\bigr).
\]
Both power semirings are finite, so the conclusion follows
from Proposition~\ref{prop:upward}.
\end{proof}

\begin{corollary}\label{cor:s7powers}
Both $\Pow^+(\ssev,\cdot)$ and $\Pow(\ssev,\cdot)$ are
strongly nonfinitely based.
\end{corollary}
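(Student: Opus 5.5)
This is a direct application of Proposition~\ref{prop:powers} with $A=\ssev$. Its only hypothesis is $\ssev\in\V(\ssev)$, which holds trivially because every algebra belongs to the variety it generates. The proposition then says that both $\Pow^+(\ssev,\cdot)$ and $\Pow(\ssev,\cdot)$ are strongly nonfinitely based.

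To make the chain explicit, I would first recall that $\ssev$ is a finite ai-semiring, so it satisfies the standing assumptions of Proposition~\ref{prop:powers}. That proposition shows that the map $\pi\colon U\mapsto\sum_{a\in U}a$ from $\Pow^+(\ssev,\cdot)$ onto $\ssev$ is a surjective homomorphism. Hence
\[
\ssev\in\V\bigl(\Pow^+(\ssev,\cdot)\bigr)\subseteq\V\bigl(\Pow(\ssev,\cdot)\bigr),
\]
where the inclusion holds because $\Pow^+$ is a subalgebra of $\Pow$. Both power semirings are finite, with $7$ and $8$ elements respectively.

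Next I would apply Proposition~\ref{prop:upward}, or equivalently Theorem~\ref{thm:main} directly. Take any finite ai-semiring $T$ whose variety contains either power semiring. By transitivity of variety inclusion, $T$ generates a variety containing $\ssev$, so $T$ is nonfinitely based. This is exactly the definition of strong nonfinite basedness for each of the two power semirings.

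There is no substantive obstacle. The only point to check is that the multiplicative reduct of $\ssev$ is a semigroup, so the power semirings are ai-semirings and the argument of Proposition~\ref{prop:powers} applies verbatim.
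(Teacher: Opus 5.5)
Your proposal is correct and matches the paper's proof, which applies Proposition~\ref{prop:powers} with $A=\ssev$. Your unpacking of that step, through the homomorphism $\pi$, the subalgebra inclusion, finiteness, and Proposition~\ref{prop:upward}, is accurate.
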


\begin{proof}
Apply Proposition~\ref{prop:powers} with $A=\ssev$.
\end{proof}

Yue, Ren and Gao~\cite{YRG} prove that these two power
semirings are nonfinitely based and give explicit infinite
bases for their identities. Corollary~\ref{cor:s7powers}
strengthens their nonfinite basis conclusions to strong
nonfinite basability.

\begin{corollary}\label{cor:groups}
Let $G$ be a finite group of order at least three. Then
both $\Pow^+(G)$ and $\Pow(G)$ are strongly nonfinitely based.
\end{corollary}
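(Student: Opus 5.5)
By Proposition~\ref{prop:upward}, it suffices to show that $\ssev\in\V(\Pow^+(G))$. Since $\Pow^+(G)$ is a subalgebra of $\Pow(G)$, this also gives $\ssev\in\V(\Pow(G))$, and Proposition~\ref{prop:upward} then applies to both power semirings. The plan is to realize $\ssev$ as a homomorphic image of a subalgebra of $\Pow^+(G)$, or of a small direct power of it. That places $\ssev$ in the variety, and then Theorem~\ref{thm:main} does the rest.

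The natural first attempt uses three kinds of nonempty subsets. Let $1\mapsto\{e\}$ with $e$ the identity, let $a\mapsto$ singletons $\{g\}$ with $g\neq e$, and let $\infty\mapsto$ all remaining nonempty subsets. This fails as stated, because $\{g\}\{g\}=\{g^2\}$ is again a singleton, whereas we need $a^2=\infty$.

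A better choice is the subalgebra $T$ generated by $\{e\}$ and a two-element set $\{e,g\}$ with $g\neq e$, together with a suitable congruence. I would instead try the following map on all of $\Pow^+(G)$. Send $U$ to $1$ if $U=\{e\}$, to $a$ if $|U|=1$ and $U\neq\{e\}$, and to $\infty$ if $|U|\geq2$. Addition is then fine: a union of two distinct singletons has size $2$, and $\{g\}\cup\{g\}=\{g\}$. Multiplication of two non-identity singletons can give a non-identity singleton, which is wrong, or $\{e\}$ when $h=g^{-1}$. So we must restrict to a subalgebra where such products are controlled.

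The key step, and the main obstacle, is choosing this subalgebra; this is where the hypothesis $|G|\geq3$ enters. I would take the sets $\{e\}$, $\{g\}$, and all subsets of size at least $2$, and map $\{g\}\mapsto a$. The product $\{g\}\{g\}=\{g^2\}$ must lie in the $\infty$ class, so I will instead work with the subalgebra generated by $\{e\}$ and $A_0=\{g,h\}$ for distinct non-identity elements $g,h$. These exist because $|G|\geq3$. Then map each set $U$ according to its intersection pattern relative to a suitable index, for instance by mapping $U$ to $1$, $a$ or $\infty$ according as $U=\{e\}$, $U=\{g,h\}$, or otherwise. I would check directly that $A_0^2\neq A_0$ and $A_0\neq\{e\}$, and that products and unions of elements of the $\infty$ class stay there. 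The delicate point is that $A_0^2$ or larger products might return to $\{e\}$ or to $A_0$. That cannot happen for $\{e\}$, since every product of $A_0$'s has size at least $2$ because $|gA_0|=2$. It could happen for $A_0$ in small groups, and there I would choose $g,h$ suitably, for instance using $|UV|\geq|U|$ together with a case check for $|G|=3$ versus larger groups.

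Alternatively, and more robustly, I would use the fact that the map $U\mapsto$ (whether $e\in U$, whether $U\neq\{e\}$) is a homomorphism to a known ai-semiring. I would then invoke the identity-level criterion of Jackson, Ren and Zhao: $\ssev\in\V(R)$ if and only if $R$ fails some identity that fails in $\ssev$, for example $x\approx x+x^2$-type identities. Concretely, I would verify that $\Pow^+(G)$ satisfies no identity refuted by $\ssev$ by simulating the $\ssev$ evaluations with $\{e\}$ and $\{e,g\}$. This step is the crux, and I expect the work to lie there.
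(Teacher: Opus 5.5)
Your reduction to showing $\ssev\in\V(\Pow^+(G))$ via Proposition~\ref{prop:upward} is fine. However, you never establish that membership, and the construction you settle on cannot work in general. Set $A_0=\{g,h\}$ and send $\{e\}\mapsto1$, $A_0\mapsto a$, and everything else to $\infty$. This is a homomorphism on the subalgebra generated by $\{e\}$ and $A_0$ only if $A_0^n\neq A_0$ for every $n\geq2$. In $G=\mathbb{Z}_2\times\mathbb{Z}_2$, and indeed in every elementary abelian $2$-group of order at least $4$, this fails for \emph{every} choice of distinct non-identity $g,h$. The set $A_0=g\{e,gh\}$ is a coset of the order-two subgroup $\{e,gh\}$, so $A_0^2=\{e,gh\}$ and $A_0^3=A_0$, which forces $a^3=a$ in the image. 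So the ``suitable choice of $g,h$'' and the case check you defer to do not exist.

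Your fallback is also broken, in three ways.
\begin{enumerate}
\item The map $U\mapsto(\text{whether }e\in U,\ \text{whether }U\neq\{e\})$ is not multiplicative. The pairs $\{g\},\{g^{-1}\}$ and $\{g\},\{h\}$ with $h\notin\{e,g^{-1}\}$ have equal patterns, but their products $\{e\}$ and $\{gh\}$ do not.
\item The criterion for $\ssev\in\V(R)$ is that \emph{every} identity of $R$ holds in $\ssev$, not that $R$ fails some identity that fails in $\ssev$.
\item The sets $\{e\}$ and $\{e,g\}$ cannot imitate $1$ and $a$, since $\{e\}\cup\{e,g\}=\{e,g\}$ whereas $1+a=\infty$.
\end{enumerate}

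The missing idea is to use the complement of the identity rather than a small set: put $D=G\setminus\{e\}$. When $|G|\geq3$ you get $D^2=G$. For any $g$, choose $x\notin\{e,g\}$ and write $g=x\cdot(x^{-1}g)$, with both factors in $D$. Now check the three sets $\{e\},D,G$:
\begin{enumerate}
\item the union of any two distinct members is $G$;
\item $\{e\}$ is a multiplicative identity;
\item $D^2=G$ and $DG=GD=G^2=G$.
\end{enumerate}
Hence $1\mapsto\{e\}$, $a\mapsto D$, $\infty\mapsto G$ embeds $\ssev$ directly into $\Pow^+(G)$. No quotient and no identity-level argument is needed, and Proposition~\ref{prop:upward} then handles both power semirings. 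This is exactly where $|G|\geq3$ enters, since for $|G|=2$ one gets $D^2=\{e\}$.
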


\begin{proof}
Let $e$ be the identity of $G$, and put $D=G\setminus\{e\}$.
We claim that $D^2=G$. Given $g\in G$, choose
$x\in G\setminus\{e,g\}$, which is possible since $|G|\geq3$.
Then $x\in D$ and $x^{-1}g\in D$, so
$g=x(x^{-1}g)\in D^2$.

The three distinct nonempty sets $\{e\}$, $D$ and $G$ form a
subsemiring of $\Pow^+(G)$. Their union of any two distinct
members is $G$. Setwise multiplication has identity $\{e\}$,
satisfies $D^2=G$, and has $G$ as an absorbing element on
this three-element subsemiring. Thus
\[
1\longmapsto\{e\},\qquad a\longmapsto D,
\qquad\infty\longmapsto G
\]
is an embedding of $\ssev$. Proposition~\ref{prop:upward}
applies to both $\Pow^+(G)$ and $\Pow(G)$.
\end{proof}

Gao, Ren, Shao and Yue~\cite{GRSY} establish that the nonempty
power semiring of a finite group is nonfinitely based exactly
when the group has order at least three. Corollary~\ref{cor:groups}
gives the strong nonfinite basis property in that range and
the same property for the full power semiring.

\end{document}